\begin{document}

\title[Stability of cycles by delayed feedback control]{On the stability of cycles by delayed feedback control}

\author{D. Dmitrishin}
\address{Odessa National Polytechnic University, 1 Shevchenko Avenue, Odessa 65044, Ukraine}
\email{dmitrishin@opu.ua}
\author{P. Hagelstein}
\address{Department of Mathematics, Baylor University, Waco, Texas 76798}
\email{paul\!\hspace{.018in}\_\,hagelstein@baylor.edu}
\thanks{This work was partially supported by a grant from the Simons Foundation (\#208831 to Paul Hagelstein).}
\author{A. Khamitova}
\address{Georgia Southern University, Department of Mathematical Sciences,  203 Georgia Avenue,
 Statesboro, Georgia 30460-8093}
\email{anna\!\hspace{.018in}\_\,khamitova@georgiasouthern.edu}
\author{A. Stokolos}
\address{Georgia Southern University, Department of Mathematical Sciences,  203 Georgia Avenue,
 Statesboro, Georgia 30460-8093}
\email{astokolos@georgiasouthern.edu}

\date{}

\subjclass[2010]{Primary 93B52, 93B60}
\keywords{control theory, stability}

\dedicatory{Dedicated to Professor Yuriy Kryakin on the occasion
     of his $60$th birthday}

\begin{abstract}

We present a delayed feedback control (DFC) mechanism for stabilizing cycles of one dimensional discrete time systems.   In particular, we consider a delayed feedback control for stabilizing $T$-cycles of a differentiable function $f: \mathbb{R}\rightarrow\mathbb{R}$ of the form
$$x(k+1) = f(x(k)) + u(k)$$
where
$$u(k) = (a_1 - 1)f(x(k)) + a_2 f(x(k-T)) + \cdots + a_N f(x(k-(N-1)T))\;,$$
with $a_1 + \cdots + a_N = 1$.   Following an approach of Morg\"ul, we construct a map $F: \mathbb{R}^{T+1} \rightarrow \mathbb{R}^{T+1}$ whose fixed points correspond to $T$-cycles of $f$.  We then analyze the local stability of the above DFC mechanism by evaluating the stability of the corresponding equilibrum points of $F$.   We associate to each periodic orbit of $f$ an explicit  polynomial whose Schur stability corresponds to the stability of the DFC on that orbit.   An example indicating the efficacy of this method is provided.
\end{abstract}

\maketitle

\newcommand{\ind}{1\hspace{-2.3mm}{1}}

\section{Introduction}
\newtheorem{thm}{Theorem}

The control of chaotic systems is a subject of considerable interest in a number of disciplines, in particular engineering, physics, and mathematics.  In the fundamental paper \cite{ogy90}, Ott, Grebogi, and Yorke observed that chaotic systems frequently contain unstable periodic orbits that may be stabilized by small time-dependent perturbations.   Subsequent papers such as \cite{chen, shinbrot} have explored specific control mechanisms for stabilizing chaotic systems.   One method of control of particular interest is the delayed feedback control (DFC) scheme introduced by Pyragas in \cite{pyragas92}.   The control in the Pyragas scheme is essentially a multiple of the difference between the current and one period delayed states of the system.  Primary advantages of this scheme include the facts that the control term vanishes if the system is already in a periodic orbit and that the control term tends to zero as trajectories approach a given periodic orbit.  The DFC control mechanism  enjoys many applications ranging from the stabilization of the modulation index of lasers to the suppression of pathological brain rhythms  \cite{bdg94, rp04}.

 In spite of its relative simplicity and broad range of application, the stability analysis of the DFC mechanism remains a difficult issue.  A paper of particular interest to us in this regard is one of Morg\"ul.  In \cite{Morgul}, Morg\"ul considers the one-dimensional discrete time system
$$x(k+1) = f(x(k)) + u(k)\;,$$
with $k \in \mathbb{Z}$ being the time index and $f: \mathbb{R} \rightarrow \mathbb{R}$ an appropriately differentiable function.     We suppose that $f$ has a (possibly unstable) $T$-periodic orbit $\Sigma_T = \{x_0^\ast, \ldots, x_{T-1}^\ast\}$, where $f(x_{j\!\mod T}^\ast) = f(x_{j + 1\!\mod T}^\ast)$.  A DFC control used to stabilize the orbit $\Sigma_T$ is$$u(k) = K(x(k) - x(k - T)).$$  As shown by Morg\"ul, we may analyze the local stability of this control by considering the auxiliary function $G: \mathbb{R}^{T+1} \rightarrow \mathbb{R}^{T+1}$ defined by
$G(z_1, \ldots, z_{T+1}) = (z_2, \ldots, z_{T+1}, f(z_{T+1}) + K(z_{T+1} - z_1))$.  We define $F: \mathbb{R}^{T+1} \rightarrow \mathbb{R}^{T+1}$ by $F = G^{T}$ (the composition of $G$ with itself $T$ times.)   Observe that $\Sigma_T':=\{x_0^\ast, \ldots, x^\ast_{T-1}, x_0^\ast\}$ is a fixed point of $F$.  The stability of $\Sigma_T$ under the control mechanism is equivalent to the stability at $\Sigma_T'$ of the system $\hat{x}(k+1) = F(\hat{x}(k))$.  The latter may be analyzed by computing the Jacobian of $F$ at $\Sigma_T'$.   The Jacobian of $F$ has a characteristic polynomial $p(\lambda)$, and the $T$-cycle $\Sigma_T$ is exponentially stable under the control provided that $p(\lambda)$ is Schur stable, i.e. all of its eigenvalues lie inside the unit disc of the complex plane.
 In \cite{Morgul}    Morg\"ul explicitly provided the calculation of the above characteristic polynomial.  Setting $a_1 = f'(x_0^\ast), \ldots, a_T = f'(x_{T-1}^\ast)$, the characteristic polynomial of the Jacobian is of the form
$$p(\lambda) = \lambda^{T+1} + c_T \lambda^T + \cdots + c_1 \lambda + c_0$$
where $c_0 = -(-1)^T K^T, c_T = - \prod_{i=1}^T(a_i + K)$, and
$$c_{T - \ell} = -(-1)^\ell K^\ell \sum_{i_1 = 1}^T \sum_{i_2 = i_1 + 1}^{T} \cdots \sum_{i_\ell = i_{\ell - 1} + 1}^{T} \prod_{i=1\atop i \neq i_1, \ldots, i_\ell}^T(a_i + K)\;.$$
\\

Motivated by this previous work, we wish to consider now a control that takes into account a deeper prehistory of the output values of a function.  In particular, we wish to consider a control of the form

$$u(k) = (a_1 - 1)f(x(k)) + a_2 f(x(k-T)) + \cdots + a_N f(x(k-(N-1)T))\;,$$
where $a_1 + \cdots + a_N = 1$, that takes into account not only the value of $f$ at $x_{k-T}$ but also $x_{k-2T}, x_{k-3T}, \ldots, x_{k-(N-1)T}$.  The reason for considering a control of this form is that, rather than having only one parameter $K$ that may be modified in our attempt to provide stability, we have a set of parameters $a_1, \ldots, a_N$ at our disposal that may be adjusted to provide a more robust control mechanism.
\\

Proceeding analogously to the ideas of Morg\"ul, we associate to the above control a map $G:\mathbb{R}^{T(N-1) + 1} \rightarrow \mathbb{R}^{T(N-1)+1}$ defined by \\

$G(x_1, \ldots, x_{T(N-1) + 1}) =$\\ $(x_2, x_3, \ldots, x_{T(N-1) + 1}, a_1f(x_{T(N-1) + 1}) + a_2f(x_{T(N-2) + 1}) + \cdots + a_Nf(x_1))$.
\\

We  define $F = G^T$, and the stability of the above control on the cycle $\Sigma_T$ may be ascertained by the location of the roots of the characteristic polynomial of the Jacobian of $F$ at $(x_0^\ast, x_1^\ast, \ldots, x_{T(N-1)}^\ast)$ where for convenience we set $x_j^\ast = x^\ast_{{j\mod T}}\;.$   (We will see that the roots of this characteristic polynomial are independent of which point in the $T$-cycle $x_0$ corresponds, a fact that is not \emph{a priori} obvious.)
\\

This approach is all well and good but is of little value if one cannot explicitly compute the characteristic polynomial of the above Jacobian.  The prospect of doing may strike one as rather intimidating, especially seeing the result of Morg\"ul's calculation indicated above when one is considering a control with only one free parameter.   The primary purpose of this note is to show that  the desired characteristic polynomial $p(\lambda)$ may not only be computed but moreover has surprisingly simple form
$$p(\lambda) =\lambda^{(N-1)T + 1} - \mu (q(\lambda))^{T}  \;,$$
where $\mu = f'(x_0^\ast) \cdots f'(x_{T-1}^\ast)$ and
$$q(\lambda) = a_1 \lambda^{N-1} + \cdots + a_{N-1}\lambda + a_N\;.$$

%A subtlety that the reader should appreciate is in order here.   We do not prove that $p(\lambda)$ is the \emph{actual} characteristic polynomial associated to the Jacobian of the above control, rather that $p(\lambda)$ is a polynomial whose roots are the same as those of the characteristic polynomial.   Now, observing that the degree of $p(\lambda)$ is the same as the degree of the characteristic polynomial, we immediately have that if the roots of $p(\lambda)$ are distinct then $p(\lambda)$ is the characteristic polynomial.   However, a  possibility exists that $p(\lambda)$ and the characteristic polynomial have roots of higher multiplicity, and hence the two polynomials may have the same roots but be different.  That being said, the primary interest in this control theory context is in the roots of the characteristic polynomial itself, and we are pleased to be able to show that they correspond exactly to the roots of $p(\lambda)$.

The derivation of the polynomial $p(\lambda)$ was, at least to us, decidedly unobvious, and it seems appropriate for it to be available in the literature.   This is the subject of the next section.   In the subsequent section we will indicate an application of this new control and advantages it has over a control only taking into account a prehistory of length $T$.

\section{Derivation of the polynomial $p(\lambda)$ associated to the control}

We begin by providing the Jacobian associated to the control $$u(k) = (a_1 - 1)f(x(k)) + a_2 f(x(k-T)) + \cdots + a_N f(x(k-(N-1)T))$$ at a point $x^\ast = (x_0^\ast, x_1^\ast, \ldots, x^\ast_{T(N-1)})\in \mathbb{R}^{T(N-1)+1}$, where $\{x^\ast_0, \ldots, x^\ast_{T-1}\}$ forms a $T$-cycle of $f$ and $x^\ast_{j+1} = f(x^*_j)$.   For convenience we assume $f$ is everywhere continuously differentiable and set $\mu_j = f'(x_j^\ast)$ and \newline $\mu = \mu_1 \mu_2 \cdots \mu_T$.  Recalling that
$G(x_1, \ldots, x_{T(N-1) + 1}) =$\\ $(x_2, x_3, \ldots, x_{T(N-1) + 1}, a_1f(x_{T(N-1) + 1}) + a_2f(x_{T(N-2) + 1}) + \cdots + a_Nf(x_1))$, we see by means of induction and application of the chain rule that the Jacobian of $F = G^T$ is given by the $(N-1)T + 1$ by $(N-1)T + 1$ matrix $J$ whose entries are given by

\[\hspace{-.3in}J(i,j) =
\begin{cases}
1 & \textup{if}\; j=i+T\\
%0 & \textup{if}\; j > i+T\\
%0 & \textup{if}\; i < (N-2)T +1\;\textup{and}\;j < i+T\\
%a_1^{i - (N-2)T - 1} \prod_{k=1}^{i - (N-2)T - 1}\mu_k & \textup{if}\; i \geq (N-2)T + 2\; \textup{and}\; j=(N-1)T + 1\\
a_{N - \lfloor{\frac{j-1}{T}}\rfloor }a_1^{i - (N-2T) - (s+1)} \displaystyle \prod_{k=s}^{i - (N-2)T-1}\mu_k&\textup{if}\; j \equiv s \hspace{-.1in}\mod T ,\;i \geq (N-2)T + 2 , \textup{and}\\
&\;\;\;\;(j-1) \hspace{-.1in}\mod T \leq (i - ((N-2)T + 2)\\
0 & \textup{otherwise}\;.
\end{cases}
\]

The above expression is very nonintuitive, and as a help to the reader we provide an example of such a matrix when $N=3$ and $T=3$.%\newpage
\begin{figure}[H]
\[ \left( \begin{array}{ccccccc}
0 & 0 & 0 & 1 & 0 & 0 &0 \\
 0 & 0 & 0 & 0 & 1 & 0 &0\\
0 & 0 & 0 & 0 & 0 & 1 &0\\
0 & 0 & 0 & 0 & 0 & 0 &1\\
a_3 \mu_1 & 0 & 0 & a_2 \mu_1 & 0 & 0 &a_1 \mu_1\\
a_1 a_3 \mu_1  \mu_2 & a_3 \mu_2 & 0 & a_1 a_2 \mu_1 \mu_2 & a_2\mu_2 & 0 &a_1^2\mu_1\mu_2\\
a_1^2 a_3 \mu_1\mu_2\mu_3 & a_1 a_3 \mu_2 \mu_3 &  a_3\mu_3 & a_1^2 a_2 \mu_1\mu_2\mu_3 & a_1 a_2\mu_2 \mu_3 & a_2 \mu_3 &a_1^3 \mu_1 \mu_2 \mu_3\\
 \end{array} \right)\]
 \caption{ Jacobian Matrix for $N$=3, $T$=3}
\end{figure}

The following lemma and subsequent theorem constitute the heart of the mathematical content of this paper.
\newtheorem{lem}{Lemma}
\begin{lem}
Let $J$ be the $(N-1)T + 1$ by $(N-1)T + 1$ matrix given above.  The eigenvalues of $J$ are precisely the roots of the polynomial $$p(\lambda) = \lambda^{(N-1)T + 1} - (\mu_1\cdots\mu_T)(a_1 \lambda^{N-1} + \cdots + a_{N-1}\lambda + a_N)^{T}\;.$$

\end{lem}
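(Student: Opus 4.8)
The plan is to analyze the eigenvalue equation $Jv = \lambda v$ directly, exploiting the near-triangular shift structure of $J$, and then to reinterpret the computation as a reduction of the determinant $\det(\lambda I - J)$. Write $v = (v_1, \ldots, v_M)$ with $M = (N-1)T+1$. The first $(N-2)T+1$ rows of $J$ contain only the shift entries $J(i,i+T)=1$, so the corresponding equations read $v_{i+T} = \lambda v_i$. Iterating, every coordinate is pinned to one of the first $T$ coordinates via $v_{mT+s} = \lambda^m v_s$ for $1 \le s \le T$. Thus the eigenproblem collapses onto the $T$ unknowns $v_1, \ldots, v_T$, and the only substantive equations left are the $T$ coming from the bottom block of rows, $i = (N-2)T + 1 + \ell$ with $\ell = 1, \ldots, T$.

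Next I would substitute the shift relations into these bottom equations using the explicit formula for $J(i,j)$. Writing $j = mT + s$ with $s \in \{1, \ldots, T\}$ the residue, the entry equals $a_{N-m}\,a_1^{\ell - s}\prod_{k=s}^{\ell}\mu_k$ and is nonzero exactly when $s \le \ell$; after substituting $v_{mT+s} = \lambda^m v_s$ the inner sum over $m$ assembles into $q(\lambda) = a_1\lambda^{N-1} + \cdots + a_N$ when $s = 1$ and into $\tilde q(\lambda) := q(\lambda) - a_1\lambda^{N-1}$ when $s \ge 2$. The $\ell$-th bottom equation therefore becomes a relation among $v_1, \ldots, v_{\ell+1}$ with coefficients built from $q$ and $\tilde q$. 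I would then prove by induction on $\ell$ that these equations force $v_{\ell+1} = \dfrac{\mu_1\cdots\mu_\ell\, q(\lambda)^\ell}{\lambda^{\ell(N-1)}}\,v_1$ for $\ell = 1, \ldots, T-1$, and that the final ($\ell = T$) equation reduces to $\lambda^M v_1 = \mu\, q(\lambda)^T v_1$, which is precisely $p(\lambda)v_1 = 0$.

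The crux, and the step I expect to be the main obstacle, is the algebraic collapse inside this induction: one must show that the weighted sum of feedback terms over the deep prehistory telescopes into an \emph{exact} power of $q(\lambda)$. With $x := q(\lambda)/\lambda^{N-1}$, the inductive step amounts to the identity $a_1^{\ell-1}q + \tilde q\sum_{t=1}^{\ell-1}a_1^{\ell-1-t}x^t = q\,x^{\ell-1}$. Using $\tilde q = q(x - a_1)/x$ together with the geometric sum $\sum_{u=0}^{\ell-2}a_1^{\ell-2-u}x^u = (x^{\ell-1} - a_1^{\ell-1})/(x - a_1)$, the right-hand side emerges after a clean cancellation. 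This perfect-power collapse is what produces the deceptively simple $q(\lambda)^T$ in $p(\lambda)$ and is the heart of the lemma.

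Finally I would upgrade the root-by-root statement to the full characteristic polynomial. The cleanest route is to run the same substitutions as column operations on $\det(\lambda I - J)$: the shift rows clear the super-diagonal blocks while contributing explicit powers of $\lambda$, and the bottom block reduces, via the same collapse, to the scalar factor $\lambda^M - \mu\,q(\lambda)^T$, yielding $\det(\lambda I - J) = p(\lambda)$ outright. Since both sides are monic of degree $M$, this simultaneously settles multiplicities and the excluded case $\lambda = 0$ (where, separately, one checks $p(0) = -\mu a_N^T$ against $\det(-J)$). A degree count, $\deg p = M = (N-1)T+1$ versus $\deg(\mu q^T) = (N-1)T$, confirms that $p$ is monic of the correct degree and that no eigenvalue is lost.
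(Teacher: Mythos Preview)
Your proposal is correct and shares the paper's overall architecture: use the shift rows $v_{i+T}=\lambda v_i$ to collapse the eigenproblem onto $v_1,\ldots,v_T$, and observe that in the bottom rows the sum over the prehistory index $m$ assembles $q(\lambda)$ for $s=1$ and $r(\lambda)=q(\lambda)-a_1\lambda^{N-1}$ for $s\ge 2$. The difference is in how the resulting $T$ equations are processed. The paper writes them as a $T\times T$ determinant and evaluates it by induction on $T$ via cofactor expansion along the last row and column. You instead solve the same system by forward substitution, inducting on the row index $\ell$ to obtain $v_{\ell+1}=\mu_1\cdots\mu_\ell\,q(\lambda)^\ell\lambda^{-\ell(N-1)}v_1$; the telescoping identity you isolate (with $x=q/\lambda^{N-1}$ and $r=q(x-a_1)/x$) is precisely the engine behind the paper's determinant recursion, just viewed row-by-row rather than size-by-size. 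Your route is a bit more elementary---no determinant manipulation---but needs the separate check at $\lambda=0$, whereas the paper's determinant computation is a polynomial identity valid for all $\lambda$. Your final paragraph, upgrading to the full characteristic polynomial by running the same substitutions as column operations on $\det(\lambda I-J)$, goes beyond the Lemma: the paper defers that to Theorem~1 and uses a perturbation/resultant argument instead, so your direct route is a genuine and cleaner alternative for that step.
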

\begin{proof}

We begin by quickly dispatching the $T=1$ case.   Observe that when $T=1$ the Jacobian matrix takes on the form
\[\left(\begin{array}{cccccc}
0&1&0&0&\cdots&0
\\0&0&1&0&\cdots&0
\\\vdots
\\0&0&0&&&1\\
\\a_N\mu_1&a_{N-1}\mu_1&a_{N-2}\mu_1&\cdots&&a_1\mu_1\\
\end{array}\right)\]

\noindent This is a classical ``companion matrix'' (see, e.g. \cite{Herstein}) and its eigenvalues are the roots of the polynomial
$$p(\lambda) = \lambda^N - \mu_1( a_1\lambda^{N-1} + a_2\lambda^{N-2} + \cdots + a_N)\;,$$ as desired.

In the following we assume that $T \geq 2$.  Let $\lambda$ be an eigenvalue of $J$.  Then $J (x_1, \ldots, x_{(N-1)T + 1})^\textup{T} = \lambda (x_1, \ldots, x_{(N-1)T + 1})^\textup{T}$ for some nonzero vector $(x_1, \ldots, x_{(N-1)T + 1})^\textup{T}$.  (Here to avoid confusion we are using the upright ``\textup{T}'' to denote the transpose of a matrix.) For our convenience, we set
 $$q(\lambda) = a_N + a_{N-1}\lambda + \cdots + a_1 \lambda^{N-1}\;$$
 \noindent and
 $$r(\lambda) = q(\lambda) - a_1\lambda^{N-1}\;.$$
 As $J(e_k) = e_{k+T}$ for $k=1, \ldots, T-k$, we immediately have that $\lambda$ must satisfy the following simultaneous equations for nonzero $(x_1, \ldots, x_T)^\textup{T}$.

\[\begin{cases}
x_1\mu_1 q(\lambda) = \lambda^{N-1}x_2
\\x_1a_1\mu_1\mu_2q(\lambda) + x_2\mu_2r(\lambda)= \lambda^{N-1}x_3
\\\vdots
\\x_1a_{1}^{T-2}\mu_1\cdots\mu_{T-1}q(\lambda) + (x_2a_1^{T-3}\mu_2\cdots\mu_{T-1} + \cdots + x_{T-1}\mu_{T-1})r(\lambda) = \lambda^{N-1}x_T
\\x_1a_1^{T-1}\mu_1\cdots\mu_Tq(\lambda) + (x_2a_1^{T-2}\mu_2\cdots\mu_T + \cdots +x_T\mu_T)r(\lambda) = \lambda^Nx_1\;.
\end{cases}\]
\noindent This holds hold for nonzero $(x_1, \ldots, x_T)^\textup{T}$ if and only if the determinant of the  $T$ by $T$ matrix
\[\hspace{-.5in}\left(\begin{array}{ccccccc}
\mu_1 q(\lambda) & -\lambda^{N-1} & 0 & 0 &  &  &0 \\
 a_1\mu_1\mu_2q(\lambda) & \mu_2r(\lambda) & -\lambda^{N-1} & 0 &  &  &0\\
a_1^2\mu_1\mu_2\mu_3q(\lambda) & a_1\mu_2\mu_3r(\lambda) & \mu_3r(\lambda) & -\lambda^{N-1} &  & \cdots &0\\
\vdots&\vdots&&&&&\vdots\\
a_1^{T-2}\mu_1\cdots\mu_{T-1}q(\lambda)&a_1^{T-3}\mu_2\cdots\mu_{T-1}r(\lambda)&\cdots&&&\mu_{T-1}r(\lambda)&-\lambda^{N-1}
\\
 a_1^{T-1}\mu_1\cdots\mu_Tq(\lambda) - \lambda^N& a_1^{T-2} \mu_2 \mu_3\cdots\mu_Tr(\lambda) &  \cdots & &  &a_1\mu_{T-1}\mu_Tr(\lambda) &\mu_T r(\lambda) \\
 \end{array} \right)\]
 \\
\noindent equals 0.  It suffices to show that this determinant is equal to the polynomial $-p(\lambda) = -\lambda^{(N-1)T +1} + (\mu_1\cdots\mu_T) (q(\lambda))^T\;.$

We proceed by induction on $T$.  Note that
\[\det \left(
\begin{array}{cc}
\mu_1 q(\lambda)&-\lambda^{N-1} \\
a_1 \mu_1\mu_2q(\lambda) - \lambda^N&\mu_2r(\lambda)
 \end{array} \right)\]
 $$= \mu_1\mu_2(q(\lambda)^2 - \lambda^{2N-1}\;.$$
 So the $T=2$ case holds.

 Suppose now that the $T=(k-1)$ case holds, i.e.
 \[\det\left(
 \begin{array}{ccccc}
 \mu_1q(\lambda)&-\lambda^{N-1}&0&\cdots&
 \\a_1\mu_1\mu_2q(\lambda)&\mu_2r(\lambda)&-\lambda^{N-1}&\cdots&\\
 \\\vdots&&&&\vdots\\
 \\&&&&-\lambda^{N-1}
 \\a_{1}^{k-1}\mu_1\cdots\mu_{k-1}q(\lambda) - \lambda^{N}&a_1^{k-2}\mu_2\cdots\mu_{k-1}r(\lambda)&\cdots&&\mu_{k-1}r(\lambda)
 \end{array}\right)\]
$$= \mu_1\mu_2\cdots\mu_{k-1}(q(\lambda))^{k-1} - \lambda^{(N-1)(k-1)+1}\;.$$

Then, taking advantage of the fact that
\[\det\left(
\begin{array}{ccccc}
x_1&&&&0
\\&x_2&&&
\\\ast&&\ddots&&
\\&&&&x_n
\end{array}\right) = x_1\cdots x_n\;,\]
we have that
\[\det\left(
 \begin{array}{ccccc}
 \mu_1q(\lambda)&-\lambda^{N-1}&0&\cdots&
 \\a_1\mu_1\mu_2q(\lambda)&\mu_2r(\lambda)&-\lambda^{N-1}&\cdots&\\
 \\\vdots&&&&\vdots\\
 \\&&&&-\lambda^{N-1}
 \\a_{1}^{k-1}\mu_1\cdots\mu_{k-1}q(\lambda) &a_1^{k-2}\mu_2\cdots\mu_{k-1}r(\lambda)&\cdots&&\mu_{k-1}r(\lambda)
 \end{array}\right)\]
 \\
$$= \mu_1\mu_2\cdots\mu_{k-1}(q(\lambda))^{k-1} - \lambda^{(N-1)(k-1)+1} + \lambda^{N}(\lambda^{N-1})^{k-2}$$
$$=\mu_1\cdots\mu_{k-1}(q(\lambda))^{k-1}\;.$$

So

 \[\det\left(
 \begin{array}{ccccc}
 \mu_1q(\lambda)&-\lambda^{N-1}&0&\cdots&
 \\a_1\mu_1\mu_2q(\lambda)&\mu_2r(\lambda)&-\lambda^{N-1}&\cdots&\\
 \\\vdots&&&&\vdots\\
 \\&&&&-\lambda^{N-1}
 \\a_{1}^{k}\mu_1\cdots\mu_{k}q(\lambda) - \lambda^{N}&a_1^{k-1}\mu_2\cdots\mu_{k}r(\lambda)&\cdots&&\mu_{k}r(\lambda)
 \end{array}\right)\]

 \[=\mu_k r(\lambda) \det\left(
 \begin{array}{ccccc}
 \mu_1q(\lambda)&-\lambda^{N-1}&0&\cdots&
 \\a_1\mu_1\mu_2q(\lambda)&\mu_2r(\lambda)&-\lambda^{N-1}&\cdots&\\
 \\\vdots&&&&\vdots\\
 \\&&&&-\lambda^{N-1}
 \\a_{1}^{k-1}\mu_1\cdots\mu_{k-1}q(\lambda)  &a_1^{k-2}\mu_2\cdots\mu_{k-1}r(\lambda)&\cdots&&\mu_{k-1}r(\lambda)
 \end{array}\right)\]
 \[+ \lambda^{N-1}\det\left(
 \begin{array}{ccccc}
 \mu_1q(\lambda)&-\lambda^{N-1}&0&\cdots&
 \\a_1\mu_1\mu_2q(\lambda)&\mu_2r(\lambda)&-\lambda^{N-1}&\cdots&\\
 \\\vdots&&&&\vdots\\
 \\a_1^{k-2}\mu_1\cdots\mu_{k-2}q(\lambda)&a_1^{k-3}\mu_2\cdots\mu_{k-2}r(\lambda)&\cdots&&-\lambda^{N-1}
 \\a_{1}^{k}\mu_1\cdots\mu_{k}q(\lambda) - \lambda^{N}&a_1^{k-1}\mu_2\cdots\mu_{k}r(\lambda)&\cdots&&a_1\mu_{k-1}\mu_{k}r(\lambda)
 \end{array}\right)\]
$$=\mu_k r(\lambda)\left[\mu_1\cdots\mu_{k-1}(q(\lambda))^{k-1}\right]$$
$$\hspace{1.6in}+ \lambda^{N-1}\left[a_1\mu_1\cdots\mu_k(q(\lambda))^{k-1} - \lambda^{(N-1)(k-1) + 1}\right]\;\;\;\;\textup{(induction)} $$
$$=\mu_1\cdots\mu_k(q(\lambda)^k - \mu_1\cdots\mu_ka_1\lambda^{N-1}(q(\lambda))^{k-1} + \mu_1\cdots\mu_ka_1\lambda^{N-1}(q(\lambda))^{k-1} - \lambda^{(N-1)k + 1}$$
$$=\mu_1 \cdots \mu_k (q(\lambda))^{k} - \lambda^{(N-1)k + 1}\;.$$
Hence by induction we see the result holds.
\end{proof}

\begin{thm}
Let $J$ be the $(N-1)T + 1$ by $(N-1)T + 1$ matrix given above.  The characteristic polynomial of $J$ is the  polynomial $$p(\lambda) = \lambda^{(N-1)T + 1} - (\mu_1\cdots\mu_T)(a_1 \lambda^{N-1} + \cdots + a_{N-1}\lambda + a_N)^{T}\;.$$
\end{thm}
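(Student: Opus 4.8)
The plan is to promote the Lemma, which is a statement about the \emph{zero set} of the characteristic polynomial, to the desired statement about the characteristic polynomial \emph{itself}. Writing $\chi_J(\lambda) = \det(\lambda I - J)$ and $n = (N-1)T + 1$, both $\chi_J$ and $p$ are monic of degree $n$, and by the Lemma they have exactly the same roots for every choice of the parameters $\mu_1, \ldots, \mu_T, a_1, \ldots, a_N$. What is missing is the agreement of multiplicities: two monic polynomials of the same degree sharing a zero set need not be equal, as $(\lambda - 1)^2(\lambda - 2)$ and $(\lambda - 1)(\lambda - 2)^2$ show. Supplying this agreement is the main obstacle, and everything else is bookkeeping.

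I would remove the multiplicity issue by specializing to parameters for which $p$ has $n$ distinct roots and then propagating the resulting identity by continuity. The coefficients of $\chi_J$ are polynomials in the entries of $J$, hence polynomials in $\mu_1, \ldots, \mu_T, a_1, \ldots, a_N$, and the coefficients of $p$ are visibly polynomials in the same parameters; thus it suffices to prove $\chi_J = p$ on a dense subset of parameter space. The discriminant $\mathrm{disc}_\lambda(p)$ is itself a polynomial in the parameters, and it is not identically zero: taking $a_1 = \cdots = a_{N-1} = 0$, $a_N = 1$, and $\mu_1 \cdots \mu_T \neq 0$ gives $p(\lambda) = \lambda^{n} - \mu_1 \cdots \mu_T$, whose roots are the $n$ distinct $n$-th roots of $\mu_1 \cdots \mu_T$. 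Hence $p$ has $n$ distinct roots off the zero set of this discriminant, a dense set. At each such parameter value the Lemma forces the monic degree-$n$ polynomial $\chi_J$ to vanish at all $n$ of the distinct roots of $p$, whence $\chi_J = p$ there. The coefficients of $\chi_J - p$ therefore vanish on a dense set, and being polynomial in the parameters they vanish identically, giving $\chi_J = p$ for all parameter values.

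It is tempting to argue instead that the reduction carried out in the proof of the Lemma already computes $\det(\lambda I - J)$, so that one reads off $\chi_J = \pm(-p)$ and fixes the sign by comparing leading coefficients. I would treat this only as a secondary route, because the shift rows of $\lambda I - J$ carry a $\lambda$ on the diagonal in addition to the $-1$ used for elimination, so confirming that passing to the $T$ by $T$ determinant of the Lemma introduces no spurious factor of $\lambda$ is itself a determinant computation rather than a triviality. The genericity argument of the previous paragraph avoids this entirely and is, to my mind, the cleaner way to close the multiplicity gap that separates the Theorem from the Lemma.
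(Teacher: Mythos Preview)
Your argument is correct and follows essentially the same strategy as the paper's proof: both note that $\chi_J$ and $p$ are monic of the same degree, invoke the Lemma for equality of zero sets, then pass to a generic parameter choice where $p$ has simple roots (via nonvanishing of the discriminant/resultant, witnessed by the same specialization $a_1=\cdots=a_{N-1}=0$, $a_N=1$) and conclude by continuity. The only cosmetic difference is that the paper fixes the $\mu_i$ and perturbs the $a_j$ through a two-parameter holomorphic family $(z,w)\in\mathbb{C}^2$, appealing to the identity theorem in several complex variables, whereas you treat all of $\mu_1,\ldots,\mu_T,a_1,\ldots,a_N$ as indeterminates and argue that the polynomial $\mathrm{disc}_\lambda(p)$ is not identically zero, which is marginally cleaner since it automatically covers the degenerate case $\mu_1\cdots\mu_T=0$.
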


\begin{proof}
The characteristic polynomial of $J$ is of course given by $\det{(\lambda I - J)}$, where $I$ is the $(N-1)T + 1$ by $(N-1)T + 1$ identity matrix.   Observe that this is a polynomial of degree $(N-1)T + 1$ with leading coefficient 1.   Hence, $p(\lambda)$ will be the characteristic polynomimal of $J$, provided all the roots of $p(\lambda)$ are distinct, as by the lemma $p(\lambda)$ is a polynomial of degree $(N-1)T + 1$ with leading coefficient 1 and sharing the same roots as the characteristic polynomial of $J$.

We now deal with the case that $p(\lambda)$ has a root of higher multiplicity.   The idea we employ is that there should exist a minor perturbation of $a_1, \ldots, a_N$ that leads to a new polynomial with distinct roots, and the continuity of the characteristic polynomial of $J$ in terms of $a_1, \ldots, a_N$ should then imply that $p(\lambda)$ is indeed the desired characteristic polynomial of $J$.

Proceeding formally along these lines, we fix $a_1, \ldots, a_N$, and for $z \in \mathbb{C}$ we define the matrix $J_{z,w}$ as the $(N-1)T + 1$ by $(N-1)T + 1$ matrix whose entries are given by
\fontsize{8}{8}
\[\hspace{-.3in}J_{z,w}(i,j) =
\begin{cases}
1 & \textup{if}\; j=i+T\\
%0 & \textup{if}\; j > i+T\\
%0 & \textup{if}\; i < (N-2)T +1\;\textup{and}\;j < i+T\\
%a_{z,w,1}^{i - (N-2)T - 1} \prod_{k=1}^{i - (N-2)T - 1}\mu_k & \textup{if}\; i \geq (N-2)T + 2\; \textup{and}\; j=(N-1)T + 1\\
a_{z,w,N - \lfloor{\frac{j-1}{T}}\rfloor }a_{z,w,1}^{i - (N-2T) - (s+1)}\displaystyle \prod_{k=s}^{i - (N-2)T-1}\mu_k& j \equiv s \hspace{-.1in}\mod T ,\;i \geq (N-2)T + 2 , \textup{and}\\
&\;\;\;\;(j-1) \hspace{-.1in}\mod T \leq (i - ((N-2)T + 2)\\
0 & \textup{otherwise}\;.
\end{cases}
\]
\fontsize{12}{12}

\noindent where $a_{z,w,j} = za_j$ for $j=1, \ldots, N-1$ and $a_{z,w,N} = a_N + w$.
This is simply the previous Jacobian matrix $J$ but where each $a_i$ has been replaced by $za_i$ for $i=1, \ldots, N-1$ and $a_N$ by $a_N + w$.    Now, it suffices to show that there exists values of $z$ arbitrarily close to 1 and $w$ close to 0 for which $J_{z,w}$ has no repeated eigenvalues.   Equivalently, it suffices to show there exist values of $z$ arbitrarily close to 1 and $w$ close to 0 for which the polynomial

$$p_{z,w}(\lambda) = \lambda^{(N-1)T + 1} - (\mu_1\cdots\mu_T)(za_1 \lambda^{N-1} + \cdots + za_{N-1}\lambda + a_N + w)^{T}\;.$$
has no repeated roots.    To show this, we take advantage of the fact that the polynomial $p_{z,w}(\lambda)$ will have a repeated root if and only if its resultant $R(p_{z,w}, p_{z,w}')$ is zero. (See, e.g., \cite{dummitfoote}.) Recall that the resultant $R(f,g)$ of the polynomials $f(x) = a_n x^n + a_{n-1}x^{n-1} + \cdots + a_1 x + a_0$ and $g(x) = b_m x^m + b_{m-1}x^{m-1} + \cdots + b_1 x + b_0$ is the determinant of the Sylvester matrix
\[\left(\begin{array}{cccccccc}
a_n&a_{n-1}&\cdots&a_0&&&&
\\&a_n&a_{n-1}&\cdots&a_0&&&
\\&&a_n&a_{n-1}&\cdots&a_0&&
\\&&&\ddots&&&&
\\&&&&a_n&a_{n-1}&\cdots&a_0
\\b_m&b_{m-1}&\cdots&b_0&&&
\\&b_m&b_{m-1}&\cdots&b_0&&&
\\&&b_m&b_{m-1}&\cdots&b_0&&
\\&&&\ddots&&&&
\\&&&&\ddots b_m&b_{m-1}&\cdots&b_0
\end{array}\;\right)\;.\]
We then observe that the resultant $R(p_{z,w}, p_{z,w}')$ may be viewed as a holomorphic function in $\mathbb{C}^2$.    Now, if $p_{z,w}(\lambda)$ had repeated roots for all $z$ sufficiently close to 1 and all $w$ close to 0, then $R(p_{z,w} p_{z,w}')$ would be identically zero for all $(z,w) \in \mathbb{C}^2$.   (See, e.g., \cite{Krantz,steinshakarchi4}.)   Setting $z=0$ and $w = 1 - a_N$, we would then have that the polynomial $\lambda^{(N-1)T + 1} - 1$ had repeated roots, a clear contradiction.

\end{proof}

\section{Stabilization of 1-cycles}

We now illustrate the effectiveness of the control
$$u(k) = (a_1 - 1)f(x(k)) + a_2 f(x(k-T)) + \cdots + a_N f(x(k-(N-1)T))\;$$ in the case that $T=1$
 by showing that for any value $\mu := \mu_1 < 1$ the characteristic polynomimal of the associated Jacobian, namely
 $$p(\lambda) = \lambda^{N} - \mu(a_1 \lambda^{N-1} + \cdots + a_{N-1}\lambda + a_N)\;,$$
 has  roots only in the unit disc for appropriately chosen $N$ and $a_1, \ldots, a_N$.  We remark that the control will automatically be unstable for $\mu \geq 1$ for any choice of $N$ and $a_i$ with $a_1 + \cdots + a_N = 1$ as the associated characteristic polynomial $p(\lambda)$ would satisfy $p(1) = 1 - \mu \leq 0$, thus failing the Jury test.
Moreover, if $0 \leq \mu < 1$ we may simply set $a_1 = 0, \ldots a_{N-1} = 0, a_N = 1$ for any $N \geq 1$ to achieve a polynomial all of whose roots lie in the unit disk.  So it suffices to consider  only the case that $\mu < 0$.

  Let $N$ be a positive integer.   We set $a_1, \ldots, a_N$ to all be $1/N$.     A root $\lambda$  of the above polynomial would then need to satisfy the equation
    $$0 = \lambda^{N} - \frac{\mu}{N}(\lambda^{N-1} + \cdots + \lambda + 1)\;.$$

    Now, if $\mu = 0$ the only solutions of the above equation is $\lambda = 0$ which of course lies in the unit disc. Moreover, the complex roots of a polynomial are continuous in the coefficients of the polynomial (see, e.g., \cite{harrismartin}) and accordingly the solutions of the above equation will lie inside the unit disc for all values of $\mu$ down to the greatest (but negative) value of $\gamma$ such that
     $$0 = \lambda^{N} - \frac{\gamma}{N}(\lambda^{N-1} + \cdots + \lambda + 1)\;$$
  has a root $\lambda$ that lies on the boundary of the unit disc.  $\gamma$ is given by the equation
  $$\frac{1}{\gamma}= \frac{1}{N}\inf\{\Re(\lambda^{-N} + \cdots + \lambda^{-1}) : \lambda \in \partial\mathbb{D}\;\textup{and}\;\Im(\lambda^{-N} + \cdots + \lambda^{-1})= 0\}\;.$$
 Expressing values on the boundary of the unit disc as $e^{i\theta}$ for $\theta \in \mathbb{R}$, we have

  $$\frac{1}{\gamma}= \frac{1}{N}\inf\{\Re(e^{-iN\theta} + \cdots + e^{-i\theta}) : \theta \in \mathbb{R}\;\textup{and}\;\Im(e^{-iN\theta} + \cdots + e^{-i\theta})= 0\}\;.$$
If $N$ is odd, we may reexpress $e^{-iN\theta} + \cdots + e^{-i\theta}$ as
\begin{align}
e^{-iN\theta} + \cdots + e^{-i\theta} &= e^{-i\frac{(N+1)}{2}\theta}\left(e^{-i\frac{N-1}{2}\theta} + \cdots + e^{i\frac{N-1}{2}\theta}\right)\notag
\\&=e^{-i\frac{(N+1)}{2}\theta}\left(\frac{\sin \frac{N\theta}{2}}{\sin \frac{\theta}{2}}\right)\;\notag
\end{align}
taking advantage of the closed form for the Dirichlet kernel.  We see that the imaginary part of the above expression is zero  only if $\theta$ is either of the form $\frac{2\pi k}{N}$ for some integer $k$ (for which $\sin(\frac{N\theta}{2}) = 0$) or of the form $\frac{2\pi k}{N+1}$ for some integer $k$ (for which the imaginary part of the exponential term vanishes.)   Exploiting the symmetry of the summands about the origin, we have that  $e^{-iN\theta} + \cdots + e^{-i\theta} = 0$ if $\theta = \frac{2\pi k}{N}$ and also that  $e^{-iN\theta} + \cdots + e^{-i\theta} = -1$ if $\theta = \frac{2\pi k}{N+1}$ .   Hence
$$-\frac{1}{N} = \frac{1}{N}\inf\{\Re(e^{-iN\theta} + \cdots + e^{-i\theta}) : \theta \in \mathbb{R}\;\textup{and}\;\Im(e^{-N\theta} + \cdots + e^{-i\theta})= 0\}\;.$$
Accordingly, we see that by setting $a_j = 1/N$ for $j=1, \ldots, N$, \emph{the above control will be stable for all values of $\mu$ in $(-N, 1)$.}   In particular, regardless of how large the magnitude of $\mu$, we see that by choosing $N$ sufficiently large the above control will be stable about the equilibrium point associated to $\mu$.

% and hence would need to satisfy the equation
% $$\frac{1}{\mu} = \frac{1}{N}\left(\frac{1}{\lambda} + \cdots + \frac{1}{\lambda^N}\right)\;.$$  Now, we may achieve our desired end if we can find $N$ such that the above equation has \emph{no solution}  for values of $\lambda$ such that $|\lambda| \geq 1$.  Setting $z = 1/\lambda$, recognizing that $\mu$ is a real number we then see it suffices to find $N$ such that
% $$\inf\left\{\Re\left(\frac{1}{N}(z + \cdots + z^N)\right): z \in \bar{\mathbb{D}} \;\textup{\;and\;}\; \Im\left(\frac{1}{N}(z + \cdots + z^N)\right)= 0\right\} > \frac{1}{\mu}\;.$$   As the real part of a holomorphic function is harmonic, by the maximum principle we see that it suffices to find $N$ such that
% $$\inf_{z \in \partial{\mathbb{D}}}\Re\left(\frac{1}{N}(z + \cdots + z^N)\right) > \frac{1}{\mu}\;.$$  As every point on the boundary of the unit disc may be expressed in the form $e^{i\theta}$ for some real value $\theta$, we se then it suffices to find $N$ such that
% $$\inf\left\{\Re(\frac{1}{N}(e^{i\theta} + \cdots + e^{iN\theta})): \theta \in \mathbb{R}\right\} > \frac{1}{\mu}\;.$$

\section{Conclusions and Future Directions}

We have seen that that the stability of the  DFC control mechanism
$$x(k+1) = f(x(k)) + u(k),$$
where
$$u(k) = (a_1 - 1)f(x(k)) + a_2 f(x(k-T)) + \cdots + a_N f(x(k-(N-1)T))\;$$
with $a_1 + \cdots + a_N = 1,$
about a $T$-cycle may be ascertained by considering the Schur stability of a polynomial of the form
$$p(\lambda) = \lambda^{(N-1)T + 1} - \mu(a_1 \lambda^{N-1} + \cdots + a_{N-1}\lambda + a_N)^{T}\;.$$   As an application, we showed in the case of $1$-cycles that  this polynomial is Schur stable for appropriately chosen $N$ and $a_1,\ldots, a_N$  for any given value of $\mu < 1$.  Moreover, in the 1-cycle case we have seen that this may be done where $N$ is of the same order of magnitude as $|\mu|$.

    A natural subsequent problem is the following:\\

\noindent    {\bf{Problem:}} For any given integer $T\geq 2$ and $\mu < 1$, do there exist $N$ and associated $a_1, \ldots, a_N$ where $a_1 + \cdots + a_N = 1$ such that all of the roots of the polynomial $p(\lambda)$ lie in the unit disc?  If so, what would be the smallest value of $N$ for which this could be done?
\\

The reader may at this point be speculating whether or not in the $T \geq 2$ case one may simply choose uniform coefficents \mbox{$a_1 = \cdots = a_N = 1/N$} and then try to prove the roots of $p(\lambda)$ all lie in the unit disc for sufficiently large $N$.  We have  analytic evidence that this cannot be done.  However, Dmitrishin and Khamitova proved in \cite{dk2013} that the $T=1$ case may be done with an improved and optimal value of $N$ on the order of magnitude of $|\mu|^{1/2}$, where the associated $a_j$ are given by the formula
$$a_j = 2 \tan\frac{\pi}{2(N+1)}\cdot\left(1 - \frac{j}{N+1}\right)\cdot \sin\frac{\pi j}{N+1}\;\;j=1, \ldots, N\;.$$
Moreover, in the paper \cite{dkks}, the authors indicate that the $T =2$ case may be done with a value of $N$ on the order of magnitude of \mbox{$|\mu_1 \mu_2|$,} and moreover that the order of magnitude in terms of the exponent of $|\mu_1\mu_2|$ is sharp.  A natural conjecture at this point is that $T$-cycles associated to a multiplier $\mu_1\cdots\mu_T$ may be stabilized by the above control for appropriate values of $a_1$, \ldots, $a_N$ where $N$ is on the order of magnitude of $|\mu_1\cdots\mu_T|^{T/2}$.   The associated proofs of the above results involve delicate estimates associated to trigonometric sums and moreover suggest that refined techniques in harmonic analysis may provide an avenue for affirmatively answering this problem, hence establishing a stability result for the above control regardless of the the length of the cycle $T$ and the magnitude of the associated multiplier $\mu$.  This is a subject of ongoing research.

\section{Acknowledgement}

We wish to thank Ron Stanke for his helpful comments and suggestions regarding the content of this paper.

\end{document}